\documentclass[11pt]{article} 
\usepackage{amsmath}
\usepackage{lscape}
\usepackage[section]{placeins}
\usepackage{mathtools}
\usepackage{amsthm}
\usepackage{times}
\usepackage{graphicx}
\usepackage{amssymb}
\usepackage{url} 
\usepackage{array}
\usepackage{microtype}
\usepackage{authblk}
\usepackage{dsfont}
\usepackage{siunitx}
\usepackage{minitoc}
\usepackage[toc,page,header]{appendix}
\usepackage{subcaption}
\usepackage[margin=1in]{geometry}
\usepackage{longtable}
\usepackage{booktabs}
\usepackage{multirow}
\usepackage{xcolor}
\usepackage{comment}
\usepackage{color,soul} 
\usepackage{graphicx} 
\usepackage{caption}
\usepackage{algorithm}
\usepackage{algorithmic}

\usepackage{tabularx}
\usepackage{array}
\usepackage{booktabs} 
\usepackage{booktabs} 
\definecolor{darkgreen}{RGB}{0,90,0.1}
\usepackage[colorlinks=true, citecolor=darkgreen, urlcolor=darkgreen]{hyperref}
\usepackage{authblk}

\usepackage{tikz}
\usetikzlibrary{calc}
\usetikzlibrary{arrows.meta,positioning,fit,backgrounds}
\usepackage{multicol}
\usepackage{dcolumn}
\usepackage{enumitem}
\usepackage[capitalize,noabbrev]{cleveref}
\usepackage[version=4]{mhchem} 
\usepackage[textsize=tiny]{todonotes}
\usepackage{natbib}
\theoremstyle{plain}
\newtheorem{theorem}{Theorem}[section]
\newtheorem{proposition}[theorem]{Proposition}
\newtheorem{lemma}[theorem]{Lemma}
\newtheorem{corollary}[theorem]{Corollary}
\theoremstyle{definition}

\theoremstyle{remark}
\newtheorem{remark}[theorem]{Remark}

\title{Novel technique based on Léja Points Approximation for Log-determinant Estimation of Large   matrices}
\author[1,3]{Verlon Roel Mbingui \thanks{Email: vmbingui@aimsric.org}}
\author[2,4]{Antoine Tambue\thanks{Email: antonio@aims.ac.za}}
\author[3]{Issa Karambal\thanks{Email: issa@aims.ac.za}}
\affil[1]{\textit{Department of Mathematics and Statistics, The University of Dodoma, Dodoma, P.O. Box 338, Tanzania}}
\affil[2]{\textit{Department of Computing Mathematics and Physics, Western Norway University of Applied
Sciences, Inndalsveien 28, 5063, Bergen, Norway} } 
\affil[3]{\textit{African Institute for Mathematical Sciences, Research and Innovation Centre (AIMSRIC), Kigali, Rwanda }}
\affil[4]{\textit{Department of Mathematics and Applied Mathematics, University of Cape Town, 7701, Rondebosch,South Africa }} 
\date{\today}

\begin{document}

\maketitle

\newcommand{\fix}{\marginpar{FIX}}
\newcommand{\new}{\marginpar{NEW}}

\begin{abstract}

The computation of the Log-determinant of large, sparse, symmetric positive definite (SPD) matrices is essential in many scientific computational fields such as numerical linear algebra and machine learning. In low dimensions, Cholesky is preferred, but in high dimensions, its computation may be prohibitive due to memory limitation. To circumvent this, Krylov subspace techniques have proven to be efficient but may be computationally  expensive due to the required orthogonalization processes. In this paper, we introduce a novel technique to estimate the Log-determinant of a matrix using Léja points, where the implementation is only based on matrix multiplications and a rough estimation of eigenvalue bounds of the  matrix. By coupling Léja points interpolation with a randomized algorithm called Hutch++, we achieve substantial reductions in computational complexity while preserving significant accuracy compared to the stochastic Lanczos quadrature. We establish the approximation errors of the matrix function together with multiplicative error bounds for the approximations obtained by this method. 

The effectiveness and scalability of the proposed method on both large sparse synthetic matrices (maximum likelihood in Gaussian Markov Random fields) and large-scale real-world matrices are confirmed through numerical experiments. 
\end{abstract}

\textbf{keywords:} Log Determinant, Léja points, Divided Differences,  Hutch++, Large matrices

\section{Introduction}




\label{sec1}
\quad
The computation of the Log-determinant of the large, symmetric definite matrix (SPD) has numerous applications in numerical linear algebra  with very wide-ranging applications in data science and scientific computation. In machine learning, the Log-determinant appears in the evaluation of the log-marginal likelihood of the  Gaussian processes \cite{RasmussenWilliams2005,dong2017logdet,zhang2007approximate}, in  normalization constant of the Gaussain Markov Random Fields (GMRFs) \cite{RueHeld2005,wainwright2006log} and in the training of deep kernels models \cite{wilson2016deepkernel,dong2017logdet}.  In scientific computing, they appear in uncertainty quantification, Bayesian inverse problems, statistical physics, etc. \cite{aune2014parameter,saibaba2017trace,chung2024efficient,latz2025deep}. However, the  challenge with the exact computation of the Log-determinant using, for instance, Cholesky decomposition requires $\mathrm{O}(n^3)$ operations and  $\mathrm{O}(n^2)$ storage  which becomes prohibitive as the dimension $n$  of the matrix increases \cite{xiang2010error,aune2014parameter,han2015large,zhang2007approximate}. 

Since the computation of the Log-determinant is equivalent to the computation of the trace of the log of the matrix, research has been focused on finding efficient approach to compute the trace instead. One of these approaches  is  stochastic  estimation of the trace. The Hutchinson method \cite{hutchinson1989stochastic} is the classical approach for estimating the Log-determinant. It uses random Rademacher vectors or Gaussian probe vectors to get an unbiased estimate. However, to achieve  $(1\pm \epsilon) $ approximation to the trace, it requires $\mathrm{O}(1/\epsilon^2)$ matrix-vector multiplications, which can be computationally expensive. Tang et al. \cite{tang2012probing} developed a structured probing method based on graph coloring, which exploits sparsity patterns to estimate the diagonals of matrix inverses more efficiently and, by extension, provides improved trace,  
 but suffers from significant limitations: perform poorly on unstructured problems, requires many costly linear solutions, and is sensitive to conditioning. Recent advances, such as Hutch++ \cite{meyer2021hutch++}, have significantly improved the classical approach. By combining a low-rank randomized approximation with variance reduction, Hutch++ reduces the number of matrix-vector multiplications required to $\mathrm{O}(1/\epsilon )$ to achieve a relative error of $\epsilon$. 
The stochastic  Lanczos quadrature (SLQ) \cite{ubaru2017fast} takes advantage of Krylov subspaces to approximate spectral sums, often outperforming Hutchinson’s estimator when the matrix is well-conditioned. However, Aune et al. \cite{aune2014parameter} evaluated the action of the matrix logarithm on a vector using a contour-integral representation of the logarithm. This approach involves solving a sequence of linear systems with shifted versions of the matrix, each corresponding to a point on the integration contour. Although this is theoretically sound, it is computationally demanding because the matrix shifts vary with each contour point, preventing efficient reuse of the factorizations. Moreover, the choice of integration contour requires prior knowledge of the spectrum, the numerical quadrature adds extra complexity, and the resulting linear systems may become unstable if the contour approaches eigenvalues. Alternatively, polynomial and rational approximation techniques have been proposed. Chebyshev polynomial expansions \cite{han2015large, han2017approximating}, for example, approximate the logarithm function over a spectral interval with near-minimax polynomials, offering predictable convergence, but requiring accurate spectral bounds and careful scaling. Despite Chebyshev's approach is well known for its stability and efficacy in polynomial interpolation, SLQ has outperformed it in various types of practical application\cite{ubaru2017fast}.

In this work, we propose a novel technique based on real Léja points interpolation. Real Léja points interpolation has gained attention as a stable alternative for approximating matrix functions. Real Léja sequences maximize the spread of interpolation points, mitigating oscillations, and enabling efficient recursive Newton interpolation. Unlike Chebyshev points, which are not nested for different polynomial degrees, Léja points form a nested sequence that can be progressively refined by adding points without having to recalculate from scratch, allowing for adaptive refinement to arbitrary granularity \cite{narayan2014adaptive,deka2022exponential,deka2025lexint}. This means that interpolation can be increased until the desired accuracy is achieved, which is ideal for large-scale adaptive algorithms. In fact, as the number of observations increases, Léja points on a real interval cluster asymptotically like Chebyshev extreme points, allowing for an equivalent quasi-minimal approximation without the Runge phenomenon \cite{breuss2016numerical}.  Even with very high polynomial degrees, Breuss et al. in \cite{breuss2016numerical} confirm that using Léja points in particular,  Fast Léja achieves the same high degree of interpolation accuracy as Chebyshev points while being simpler to use. Applications to exponential matrix functions \cite{tambue2010exponential, tambue2013efficient,caliari2004interpolating} and related $\varphi$-functions have shown remarkable stability and accuracy. However, despite this potential, to the best of our knowledge, the application of Léja points interpolation to the matrix logarithm and, by extension, the log determinant estimation has not been systematically explored. 
The goal of this paper is to develop and analyze a Léja points interpolation-based method for Log-determinant estimation. Our approach is to replace the usual polynomial or rational approximation for $\log(Q)v$ with a Newton-Léja interpolation scheme, tailored specifically for the logarithm. The contribution of this article can be summarized as follows. 
\begin{enumerate}

   \item[(i)] We develop and analyze a polynomial interpolation scheme for the action of the matrix logarithm, based on real Léja points generated in the fixed interval $[-2,2]$ and affinely mapped to the spectral interval $[\lambda_{min},\lambda_{max}]$ of the SPD matrix $Q$. This construction yields an approximation to $\log(Q)v$ without requiring eigendecomposition or rational factorization.

  \item [(ii)]We introduce a novel centering technique for computing the divided differences of the logarithmic function based on scaling.
  
 \item [(iii)]  We combine the Léja-based oracle with the Hutch++ stochastic trace estimator.
\item[(iv)] We present the error bounds for Léja interpolation in the matrix function approximation and for Léja-Hutch++ trace estimation of symmetric positive definite matrices.
\item[(v)] Our experiments show that  the combination of Léja points interpolation  with Hutch++  yields significant speedups  over SLQ with competitive accuracy on large sparse problems.  
\end{enumerate}
The paper is organized as follows: We present the related work on Log-determinant in \Cref{log det approx}. \Cref{approximation} describes the Léja points interpolation  approach used for  computing the action of the logarithm of the matrix to a vector. A novel approach to compute the divided differences is also provided along with  error bounds for the Léja points approximation.  \Cref{Léja_hutch_sec} describes the algorithm to estimate  the trace via Léja points interpolation and Hutch++ and estimates the  error bounds  of  the approximation obtained by this method. In \Cref{experiments}, we evaluate the performance of our proposed  Léja-based Log-determinant estimator through extensive numerical experiments, demonstrating its efficiency compared to Stochastic Lanczos quadrature (SLQ).

\section{Related works}
\label{log det approx}
Let $Q \in \mathbb{R}^{n\times n}$ denotes a symmetric positive matrix (SPD). Then the logarithm of the determinant of $Q$ is defined as
\begin{equation}
    \log(\det Q) = \mathrm{tr}(\log (Q )),
    \label{Léja1}
\end{equation}
where 
\begin{equation}
 \mathrm{tr}(\log Q) =\sum_{j=1}^{n}e_j^T\log(Q)e_j,
    \label{Léja2}   
\end{equation}
with $e_j$ a standard basis vector. 
For a large matrix $Q$, computing the logarithm of the determinant from the right-hand side of the equation \eqref{Léja2} becomes computationally expensive as $n$ grows. Instead, we consider approximating it using the Monte Carlo method, i.e., 
\begin{equation}
    \mathrm{tr}(\log(Q))\approx \dfrac{1}{m_{vec}}\sum_{j=1}^{m_{vec}}v_j^T\log(Q)v_j,
\label{Léja8}
\end{equation}
where $v_j\in\mathbb{R}^n$ is a vector whose entries are drawn from $\{-1,1\}$ with equal probability, and $m_{vec}<n$.
 
One of the existing approaches typically rely on Hutchinson’s stochastic trace estimator\cite{bai1996some}, which yields a Monte Carlo approximation of the Log-determinant. Given its Monte Carlo nature, confidence intervals can be constructed using Hoeffding's inequality \cite{bai1996bounds,baglama1998fast,aune2014parameter}. This inequality can also provide guidance for selecting an appropriate sample size ($m_{vec}$). While this approach is memory-efficient, it necessitates a sufficiently large sample size to ensure accurate estimation. 
The computational cost can be significant, potentially leading to lengthy computation times for a single - determinant approximation \cite{aune2014parameter}.
Using probing vectors\cite{tang2012probing}, it is possible to achieve accurate results with fewer vectors than those required by Monte Carlo methods \cite{aune2014parameter}.  Aune et al., Bekas et al. and Tang et al \cite{aune2014parameter,bekas2007estimator,tang2012probing}  have pioneered the use of probing vectors to extract the diagonal elements of matrices and their inverses. Bekas et al. \cite{bekas2007estimator} have specifically focused on extracting the diagonal elements of sparse matrices under certain constraints. Tang et al. \cite{tang2012probing} employ an approximate sparsity pattern of the inverse of $Q$, determined by the power of the matrix $Q$ ($Q^p, p = 1, 2, 3, \dots $). The Hermite polynomial interpolation \cite{higham2008functions} confirms the accuracy of this approach for sufficiently large powers of $p$. However, such approaches may become computationally impractical in large-scale settings. In many applications, the  inverse of  the SPD matrix $Q$ and well known matrix function such as $ \log(Q)$  exhibits a significant decay, a lower power of $p$ may suffice. This observation naturally leads to a graph-based interpretation: each matrix $Q$ can be associated with a graph whose structure reflects its sparsity pattern,  often be well-approximated by a matrix sharing the same sparsity pattern as $ Q^p $, enabling the design of efficient probing vectors. However, finding an appropriate value of $p$ is nontrivial. For selecting the power $p$ in $Q^p$,
 Tang and Saad \cite{tang2012probing}  suggested a heuristic way based on the decay of entries in the solution of a linear system which can be computationally expensive for large-scale problems. 
Specifically, one solves
\begin{equation}
  Qx = e_j,
  \label{eq_ prob-graph_col}
\end{equation}
and determines the smallest integer $p$ satisfying
$$
p = \min \left\{ d(\ell,j) \;\middle|\; |x_\ell| < \varepsilon \right\},
$$
where $d(\ell,j)$ denotes the graph distance between points $\ell$ and $j$, $\varepsilon$ is a prescribed tolerance, and $e_j$ denotes the $j$-th canonical basis vector. 
This criterion exploits the relationship between graph distance and the decay of off-diagonal entries in $Q^{-1}$. 

In the other hand, to estimate the action of $\log(Q)$ on a vector $v_j$, Aune et al. \cite{aune2014parameter} numerically approximated the Cauchy’s integral 
\begin{equation*}
\frac{1}{2\pi i}\oint_{\Gamma} \log(z)\,(zI-Q)^{-1}v_j\,dz,
\label{eq_cauchy-log}
\end{equation*}
for a certain contour $\Gamma$ enclosing the spectrum of $Q$, using a quadrature formula. 
However, due to the geometry of the contour, their approach proved inefficient. To address this issue, Hale et al. \cite{hale2008computing} applied a conformal contour transformation, followed by a trapezoidal rule method, to achieve rapid convergence rates through reliable estimation of the smallest and largest eigenvalues of the corresponding matrix. 
This approach naturally yields a rational approximation to the logarithm, where $\log(Q)v_j$ is approximated by a finite sum of shifted resolvent evaluations. 
\[
(\sigma_\ell I-Q)x = v_j,
\]
with complex shifts $\sigma_\ell$. 
In the context of large-scale systems, the problem is posed through the use of Krylov subspace methods. The work of Aune et al. makes use of the shift-invariance property held by the Krylov subspace to derive all the shifted solutions simultaneously without the use of any matrix-vector products.  Despite the theoretical attractiveness of this approach, this methodology also has some issues regarding its practicality :  the process of solving several shifted linear systems comes with a substantial amount of computational complexity and sensitivity to how well the available preconditioners work; having complex shifts will make the process of implementing and analyzing the numerical stability of the scheme more complicated, mostly due to the proximity of the eigenvalues of the matrix $Q$ to the integration contour.

Motivated by these limitations, in this paper, we propose a novel method that is matrix-free and relies on two main ingredients : 
\begin{enumerate}
    \item computing the action of a matrix logarithm $\log(Q)$ on a vector $v$ using Newton interpolation at real Léja points.
    \item The Hutch++ algorithm \cite{meyer2021hutch++} for variance-reduced stochastic trace estimation.
\end{enumerate}
\section{Approximating the  matrix-vector product $\log(Q)v$  to $P_m(Q)v$ using Real fast Léja points technique}
\label{approximation}
To approximate the action of the matrix logarithm on a vector, we employ Newton interpolation at Léja points. 
For a given probing vector $v$, Real fast Léja points approximate $\log(Q)v$ by $P_m(Q)v$ instead of computing the full matrix function $\log (Q)$,  where $P_m$ is an interpolation polynomial of degree $m$ of $\log$ at the sequence of points $\{\xi_i\}^{m}_{i=0}$ called spectral real fast Léja points. These points $\{\xi_i\}^{m}_{i=0}$  belong to the spectral focal interval $[\lambda_{min}, \lambda_{max} ]$ of the matrix $Q$, i.e the focal interval of the smaller ellipse containing all eigenvalues of $Q$. This spectral interval can be estimated by the well-known Gershgorin circle theorem \cite{thomas2013numerical}. 

For a real
interval $[\lambda_{min}, \lambda_{max}]$, a sequence of real fast Léja points $\{\xi_i\}^m_{i=0}$ is defined recursively as follows. Given an initial point $\xi_0$, usually  $\xi_0 = \lambda_{max} $, the sequence of fast Léja points is generated by :
    \begin{equation}
\prod_{k=0}^{j-1}\vert \xi_j - \xi_k \vert = \max_{\xi \in [\lambda_{min}, \lambda_{max}]}\prod_{k=0}^{j-1}\vert \xi - \xi_k \vert \qquad j = 1,2,3, \dots.
        \label{Léja5}
    \end{equation}
We use the Newton's form of the interpolating polynomial $P_m$  given by 
\begin{equation}
P_m(x)= f [ \xi_0 ] + \sum_{j=1}^{m}f [ \xi_0, \xi_1, \cdots, \xi_j]\prod_{k=0}^{j-1}( x - \xi_k)
    \label{Léja6}
\end{equation}
where $f[\bullet]$ are divided differences. 
\begin{remark}  Since Gershgorin circle theorem  depend on diagonal dominance \cite{deville2019optimizing}, not on definiteness, the lower bound of the matrix may be negative. In particular, if some rows have relatively small diagonal entries compared to the sum of the off-diagonal magnitudes. Thus, Gershgorin bounds are robust and not expensive; they are generally too conservative and may misleadingly suggest nonpositive eigenvalues even for SPD matrices. To remedy this, in practice, we replace the negative value with an $\epsilon >0$. An alternative approach \footnote{The description of the alternative approach can be found in the Appendix \ref{appendix A}} to estimate the spectral, which can be used by estimating $\lambda_{max}$ using the Lanczos iteration \cite{lanczos1950iteration,saad2011numerical} and $\lambda_{min}$ using the shift-invert strategy \cite{ericsson1980spectral}. This method does not have the limitation of the Gershgorin circle theorem; however, these approaches are perhaps somewhat computationally costly compared to the Gershgorin circle theorem. 
\end{remark}
In this work, we propose a novel computational technique for divided differences.
\subsection{Proposed alternative method for computing the divided differences of the logarithm function}
\label{subsec_ prop_scheme_dd}
The  divided difference $f[\bullet]$ are defined recursively by
\begin{equation}
\displaystyle
\begin{cases}
f[\xi_j]=f(\xi_j) \\
f[\xi_j,\xi_{j+1}] = \dfrac{f(\xi_{j+1})-f(\xi_{j})}{\xi_{j+1}-\xi_{j}}\\
f[\xi_j, \xi_{j+1}, \cdots, \xi_k] : = \dfrac{f[\xi_{j+1}, \xi_{j+2}, \cdots, \xi_k]-f[\xi_j, \xi_{j+1}, \cdots, \xi_{k-1}]}{\xi_k-\xi_j}.
\end{cases}
\label{Léja7}
\end{equation}
Due to cancellation errors, the standard technique for computing the divided differences may not produce accurate results at machine precision. It can also be numerically unstable, especially when the interpolation points are close together. Therefore, we propose a scaling technique inspired by \cite{caliari2007accurate, mccurdy1984accurate} for exponential matrix functions. In addition, given that Léja points in the interval $[-2,2]$ produce optimal accuracy (cf.~\cite{reichel1990newton}), we consider the map 
\begin{equation}
\label{z_mapping}
[-2,2]\ni  \xi\mapsto z=c+\gamma\xi\in[\lambda_{min}, \lambda_{max}]
\end{equation}
Here $\lambda_{min}$ and $\lambda_{max}$ are the minimum and maximum eigenvalues of the matrix $Q$, respectively, and
\begin{equation}
 c=(\lambda_{min} + \lambda_{max})/2 
 \label{eq_centre}
\end{equation} and 
\begin{equation}
 \gamma =(\lambda_{max}- \lambda_{min} )/4.  
 \label{eq_gamma}
\end{equation}
Let $s_{val}>0$ be a scaling factor. To improve the stability, observe that 
\begin{equation}
\label{log_decomposition}
    \log(z)=\log(s_{val})+\log\bigl(1+(z/s_{val}-1)\bigr)
\end{equation}
Thus, the right-hand side of the above equation admits a Taylor expansion if $|\tfrac{z}{s_{val}}-1|\le1$. Consequently, 
\begin{equation}
    s_{val}\ge\frac{\lambda_{max}}{2}.
\end{equation}
The following lemma gives the optimal choice of $s_{val}$.
\begin{lemma}
\label{lem_ Minimax optimality}
Let $s_{val}>0$. Then the optimal scaling value $s_{val}^\star$ satisfying the Taylor expansion condition is given by
\begin{equation}
    \begin{split}
      s^{\star}_{val}&=\min_{s_{val}>0}\big\{\max_{z\in[\lambda_{min},\lambda_{max}]}|z/s_{val}-1| \bigr\} \\
      &=(\lambda_{min} + \lambda_{max})/2 
    \end{split}
\end{equation}
At the optimal choice, one has
$$
|z/s^{\star}_{val}-1|\le \dfrac{\lambda_{max}-\lambda_{min}}{\lambda_{max}+\lambda_{min}}\le1.
$$
\end{lemma}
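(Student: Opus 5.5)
The statement is a one-dimensional minimax problem, and I will solve it directly; the notation $s^\star_{val}=\min\{\cdots\}$ is read as the minimizer (arg min). Fix $s=s_{val}>0$ and define
$$
\phi(s)=\max_{z\in[\lambda_{min},\lambda_{max}]}\left|\frac{z}{s}-1\right| .
$$
Since $z\mapsto |z/s-1|$ is convex (it is the absolute value of an affine function), its maximum over the interval is attained at an endpoint. Hence
$$
\phi(s)=\max\left\{\left|1-\frac{\lambda_{min}}{s}\right|,\ \left|\frac{\lambda_{max}}{s}-1\right|\right\}.
$$

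Next I split into three cases according to the position of $s$ relative to the spectral interval.

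\emph{Case $s\le\lambda_{min}$.} Then $\phi(s)=\lambda_{max}/s-1$, which is decreasing in $s$.

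\emph{Case $s\ge\lambda_{max}$.} Then $\phi(s)=1-\lambda_{min}/s$, which is increasing in $s$.

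\emph{Case $\lambda_{min}\le s\le\lambda_{max}$.} Here
$$
\phi(s)=\max\left\{1-\frac{\lambda_{min}}{s},\ \frac{\lambda_{max}}{s}-1\right\}.
$$
The first term increases in $s$ and the second decreases, so the minimum of the maximum is attained where they coincide. Setting $1-\lambda_{min}/s=\lambda_{max}/s-1$ gives $s=(\lambda_{min}+\lambda_{max})/2$.

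Piecing the cases together, $\phi$ is decreasing on $(0,s^\star]$ and increasing on $[s^\star,\infty)$, so $s^\star_{val}=(\lambda_{min}+\lambda_{max})/2$ is the global minimizer. It coincides with the center $c$ in \eqref{eq_centre}.

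For the bound, I evaluate $\phi$ at the minimizer:
$$
\phi(s^\star)=\frac{2\lambda_{max}}{\lambda_{min}+\lambda_{max}}-1=\frac{\lambda_{max}-\lambda_{min}}{\lambda_{max}+\lambda_{min}}.
$$
This quantity is at most $1$ because $\lambda_{min}>0$, which holds since $Q$ is SPD (or, in practice, because the Gershgorin lower bound is replaced by $\epsilon>0$). It follows that the Taylor condition below \eqref{log_decomposition} holds, and even strictly when $\lambda_{min}>0$.

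I would also check consistency with the earlier necessary condition $s_{val}\ge\lambda_{max}/2$. This holds because $s^\star=(\lambda_{min}+\lambda_{max})/2\ge\lambda_{max}/2$.

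No real obstacle is expected. The only point needing care is the interpretation of the statement: it must be read as an arg min, and one must justify that the minimum of the pointwise maximum occurs at the equalization point. The monotonicity argument across the three cases settles that.
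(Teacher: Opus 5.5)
Your proof is correct and follows essentially the same route as the paper: you reduce to the endpoints by convexity, equalize the two endpoint errors for $s_{val}\in[\lambda_{min},\lambda_{max}]$, and use monotonicity of $\lambda_{max}/s-1$ and $1-\lambda_{min}/s$ outside that interval to conclude that $s^\star_{val}=(\lambda_{min}+\lambda_{max})/2$ is the global minimizer. Your remark that one endpoint term is increasing and the other decreasing justifies the equalization step, which the paper only asserts, and your arg-min reading of the statement matches how the paper's proof treats it.
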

\begin{proof}
For any fixed $s_{val}>0$, the function $z \mapsto \big \vert \dfrac{z}{s_{val}}-1\big \vert$ is convex in $z$, so the maximum over $[\lambda_{min},\lambda_{max}]$ is reached at one of the endpoints. Set
$$
D(s_{val}) = \max\left\{\bigl|\dfrac{\lambda_{min}}{s_{val}}-1\bigr|,\bigl|\dfrac{\lambda_{max}}{s_{val}}-1\bigr|\right\}.
$$
We now minimize this quantity over $s_{val}>0$. First we consider $s_{val} \in [\lambda_{min}, \lambda_{max}] $. Then 
$$ \vert \lambda_{min}-s_{val}\vert = s_{val}-\lambda_{min} \quad \text{and} \quad \vert \lambda_{max}-s_{val}\vert = \lambda_{max}- s_{val}$$
The optimal choice is achieved when the two endpoint errors are equal in magnitude,
$$
1-\tfrac{\lambda_{min}}{s_{val}} = \tfrac{\lambda_{max}}{s_{val}}-1,
$$
resulting in $s_{val}^\star=(\lambda_{min}+\lambda_{max})/2$. Substituting back gives
$$
D(s_{val}^\star) = \frac{\lambda_{max}-\lambda_{min}}{\lambda_{max}+\lambda_{min}}.
$$
Finally, if $s_{val} < \lambda_{min}$ then
$$ D(s_{val})=\dfrac{\lambda_{max} -s_{val}}{s_{val}}= \dfrac{\lambda_{max}}{s_{val}}-1. $$
Taking the derivative of the previous expression, we get
$$ D'(s_{val})= - \dfrac{\lambda_{max}}{s^2}<0, \quad \text{since}\quad \lambda_{max}>0.$$
Thus in $(0,\lambda_{min})$,  $D(s_{val})$ strictly decreases. So the largest $s_{val}$ in the region is $s_{val}=\lambda_{min}$.

Similarly, if $s_{val}>\lambda_{max}$, then
$$ D(s_{val})=\dfrac{s_{val}-\lambda_{min} }{s_{val}}= 1-\dfrac{\lambda_{min}}{s_{val}}. $$

Taking the derivative in the previous expression, 
$$ D'(s_{val})=  \dfrac{\lambda_{min}}{s^2}>0, \quad \text{since}\quad \lambda_{min}>0.$$
Then, the region $s>\lambda_{max}$, $D(s_{val})$ strictly increases so the best choice is at $s_{val}=\lambda_{max}$. Therefore the global minimizer lies in $[\lambda_{min}, \lambda_{max}]$ and it is $s_{val}^\star$.
\end{proof}
We now describe our approach, which enables a stable computation of the divided differences associated with the logarithm function. Let $w_i = z_i/s_{val}-1$. Since $|w|\le1$ for $s_{val}\ge\lambda_{max}/2$, we can estimate the evaluation of $\log$ in \eqref{log_decomposition} at an interpolation point $z_i=c+\gamma\xi_i$, with $\xi_i$ being Léja points
\begin{equation*}
    f(z_i)=\log(z_i)\approx \log(s_{val})+\sum_{k=1}^{p} (-1)^{k+1} \frac{w^k_i}{k}
\end{equation*}

In the matrix form, the divided differences are given by the first column of the matrix function $\log(Q_m)$ (cf.~\cite{geiger2012exponential})
$$
\log(Q_m) e_1 \approx \bigl( \log(s_{val}) I + \sum_{k=1}^{p} (-1)^{k+1} \frac{W_m^k}{k} \bigr) e_1, 
$$

where, $e_1 \in \mathbb{R}^{m+1} ~ ~ (\text{the first standard basis vector }),$ 
$$
W_m = \frac{1}{s_{val}}(Q_m - s_{val} I) ~ \text{and} ~ Q_m = cI-\gamma \widehat{Q}_m,
$$
with 
\begin{equation}
    \widehat{Q}_m =  
\begin{pmatrix}
\xi_0 &   \\
1 & \xi_1 & \\
&1& \ddots &  \\
&&\ddots & \ddots &  \\
&&&\ddots & \ddots &  \\
&&&&1& \xi_n
\end{pmatrix}.
\label{eq_leja_matrix}
\end{equation}

In practice, the real fast Léja points are calculated once within the interval $[- 2, 2]$ (see eg. \cite{geiger2012exponential}) and reused at each time step during the computation of the divided differences. We employ the efficient algorithm from \cite{baglama1998fast} to precompute a large number of real fast Léja points in the interval $[- 2, 2]$. We summarized our approach in the following Algorithm.
\\


\begin{algorithm}[H]
 \caption{Compute of stable divided differences $\mathbf{d}$ of $\log(c+\gamma\xi)$ at Léja points using matrix-function formulation.}
\label{algo:dd_stable}
\begin{algorithmic}
 \STATE \textbf{Input:}{ $Q, z,p$ \{matrix,  number of Léja points to be generated Point, Taylor order\}}
    \STATE 1. $\left[\lambda_{min}, \lambda_{max} \right]= \mathrm{getfocal}(Q)$ (\text{get focal interval using Gershgorin Circle theorem \cite{thomas2013numerical} })) 
 \STATE 2. $\xi = \mathrm{getLeja}(-2, 2, z)$  (generate $z$   Léja points  from Eq. \eqref{Léja5})
\STATE 3. $c=(\lambda_{min}+\lambda_{max})/2,\quad \gamma=(\lambda_{max}-\lambda_{min})/2$

\vspace{0.1cm}
\STATE 4. Construct the Léja matrix $\widehat{Q}_m$ using Eq. \eqref{eq_leja_matrix}
\STATE  5. Form the matrix $Q_m = cI + \gamma \widehat{Q}_m  $
\STATE 6. Choose $s_{val}$ 
\STATE Form the matrix $W_m=\dfrac{1}{s_{val}}\left(Q_m-s_{val}I  \right)$
\STATE 7. Compute $\log(I+W_m)$ using Taylor expansion of order $p$
\STATE 8. Compute $\log(Q_m) e_1 = \bigl( \log(s_{val}) I + \log(I + W_m) \bigr) e_1$
\STATE \textbf{Output: } $\mathbf{d}= \log(Q_m) e_1 $.
\end{algorithmic}
\end{algorithm}
The following Algorithm computes the action of the matrix function $\log(Q)$ on a vector  $v$.
In our implementation, we estimate  the focal interval for $Q$ only once and precompute a sufficiently large number $\xi$ of Léja points using the efficient algorithm
 of \cite{baglama1998fast} for a focal interval of $Q$. 
 The divided differences are computed using Algorithm \ref{algo:dd_stable}.
 \\
\begin{algorithm}
 \caption{Compute $\log(Q)v$ with real fast Léja points. Error $e_m$ is controlled to a prescribed tolerance $tol$ so that $e_m^{Leja} < tol.$}
\label{algoLéja}
\begin{algorithmic}
 \STATE \textbf{Input:}{ $Q, v, tol, z$ \{matrix, vector, tolerance, number of Léja points to be generated Point\}}
    \STATE 1. $\left[\lambda_{min}, \lambda_{max} \right]= getfocal(Q)$ (\text{get focal interval using Gershgorin Circle theorem \cite{thomas2013numerical} })) 
 \STATE 2. $\xi = getLeja(-2, 2, z)$  (generate $z$   Léja points  from Eq. \eqref{Léja5})
\STATE 3. $c=(\lambda_{min}+\lambda_{max})/2,\quad \gamma=(\lambda_{max}-\lambda_{min})/2$

\vspace{0.1cm}
\STATE 4. Compute the divided differences
       $\{d_0,\ldots,d_{z-1}\}$ of $\log$ at the points
       $\{c+\gamma\xi_i\}_{i=0}^{z-1}$

\STATE  5. $w_0=v, P_0 = d_0w_0, m=0$ \{initialisation\}
 
   \WHILE{$e_m^{\text{Léja}} = \lvert d_m \rvert \cdot \lVert w_m \rVert_2 > tol$}
    \STATE $w_{m+1} = (Q-\xi_m I)w_m$
    \STATE $m = m+1$
  
  \STATE $P_m(Q)v = P_{m-1} + d_m w_m$\;
\ENDWHILE
\STATE \textbf{Output: } $ P_m(Q)v.$
\end{algorithmic}
\end{algorithm}
\\
In the following proposition, we derive error bounds for the Léja points approximation (which gives the convergence rate).
\begin{proposition}
\label{thm : approximation}
  Let $K = [\lambda_{min},\lambda_{max}]$ such that $0< \lambda_{min}<\lambda_{max}$. Let $P_m$ be the $m$- degree Newton-Léja interpolant of $f(x)=\log(x)$ given by \eqref{Léja6}. Denote by $\Lambda_{m+1}$, the Léja interpolation Lebesgue constant. Then, for every $\rho>1$ with 
    $$ \rho=\dfrac{\sqrt{\kappa}+1}{\sqrt{\kappa}-1}, \quad \text{where}\quad \kappa=\dfrac{\lambda_{max}}{\lambda_{min}},$$
there exist a constant $C_\rho>0$ such that 
$$\Vert \log -P_m \Vert_\infty \le (1+\Lambda_{m+1})C_\rho \rho^{-m}.$$
It then follows 
\begin{equation}
 \lim \sup _{m\rightarrow \infty}\Vert \log -P_m \Vert_\infty ^{1/m}\le \rho^{-1}.   \label{eq_lebesgue leja1}
\end{equation}
Furthermore, for any $v\in \mathbb{R}^n$, we have
\begin{equation}
\limsup_{m\rightarrow\infty}\Vert \log(Q)v-P_m(Q)v\Vert_2^{1/m}\le \dfrac{1}{\rho}. 
\label{eq_action_error}
\end{equation}
\end{proposition}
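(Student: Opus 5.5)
The plan is the standard Lebesgue-constant argument combined with Bernstein's theorem on analytic continuation into an ellipse, followed by the spectral theorem for the matrix statement.

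\textbf{Step 1 (best approximation via Bernstein).} Map $K$ affinely onto $[-1,1]$ by $t=(2x-\lambda_{min}-\lambda_{max})/(\lambda_{max}-\lambda_{min})$. The singularity of $\log$ at $x=0$ is then sent to
\[
t_0=-\frac{\lambda_{max}+\lambda_{min}}{\lambda_{max}-\lambda_{min}}=-\frac{\kappa+1}{\kappa-1}.
\]
The Bernstein ellipse $E_R$ (foci $\pm1$, semi-axis sum $R$) passing through $t_0$ has $R=|t_0|+\sqrt{t_0^2-1}$. Since $t_0^2-1=4\kappa/(\kappa-1)^2$, this gives
\[
R=\frac{\kappa+1+2\sqrt\kappa}{\kappa-1}=\frac{(\sqrt\kappa+1)^2}{(\sqrt\kappa+1)(\sqrt\kappa-1)}=\rho .
\]
So $\log$ (principal branch) is analytic inside $E_{\rho}$ but not on any larger ellipse. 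For every $1<r<\rho$ it is analytic and bounded on $\overline{E_r}$, with $M_r=\max_{E_r}|\log|$ finite. Bernstein's theorem then gives
\[
E_m(\log):=\min_{\deg p\le m}\|\log-p\|_{\infty,K}\le \frac{2M_r}{r-1}\,r^{-m}.
\]
Because the branch point lies exactly on $E_\rho$, one cannot take $r=\rho$ literally. I will therefore either read the statement's ``$C_\rho\rho^{-m}$'' with $\rho$ replaced by an arbitrary $r<\rho$, with constant $C_r$, or use the sharper result that the Chebyshev coefficients of $\log$ decay like $\rho^{-k}/k$, which gives $E_m\le C\rho^{-m}$ directly. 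The second option is preferable and is the point I expect to be the main obstacle. If it does not go through cleanly, I will state the bound for all $r<\rho$, which suffices for the limsup claims.

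\textbf{Step 2 (Lebesgue inequality).} The map $f\mapsto P_m$ is a linear projection onto polynomials of degree $\le m$, with operator norm equal to the Léja Lebesgue constant ($\Lambda_{m+1}$ in the paper's notation). Let $p^*$ be the best approximation. Then
\[
\log-P_m=(\log-p^*)-P_m(\log-p^*),
\]
so $\|\log-P_m\|_\infty\le(1+\Lambda_{m+1})E_m$, which is the first claimed bound. For \eqref{eq_lebesgue leja1}, I will invoke the known fact that Léja Lebesgue constants on an interval grow subexponentially (Taylor--Totik; at most polynomially for fast Léja points, as in \cite{reichel1990newton}), so $\Lambda_{m+1}^{1/m}\to1$. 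Taking $m$-th roots gives $\limsup\le 1/r$ for every $r<\rho$, and letting $r\to\rho$ gives $\rho^{-1}$.

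\textbf{Step 3 (matrix action).} $Q$ is SPD with spectrum in $K$, so write $Q=U\Lambda U^T$ with $U$ orthogonal. Then
\[
\|\log(Q)v-P_m(Q)v\|_2=\|(\log-P_m)(\Lambda)U^Tv\|_2\le \max_{\lambda\in\sigma(Q)}|\log\lambda-P_m(\lambda)|\,\|v\|_2\le\|\log-P_m\|_\infty\|v\|_2 .
\]
Taking $m$-th roots, and using $\|v\|_2^{1/m}\to1$ for $v\ne0$ (the case $v=0$ is trivial), yields \eqref{eq_action_error}.
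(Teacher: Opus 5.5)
Your proof is correct and follows the same route as the paper: the Lebesgue inequality, the Bernstein-ellipse bound on $E_m$ after mapping $K$ to $[-1,1]$, Taylor--Totik's $\Lambda_m^{1/m}\to1$ (this is all you need, so drop the unsupported polynomial-growth aside for fast Léja points), and then the spectral bound for $\log(Q)v$. Your concern about $r=\rho$ is justified, because $x=0$ lies on $\partial\mathcal{E}_\rho$ and the paper's $M_\rho$ is in fact infinite; your second option settles this cleanly, since the substitution $t=(w+w^{-1})/2$ gives $c+at=\tfrac{a\rho}{2}(1+w/\rho)(1+w^{-1}/\rho)$, hence $\log(c+at)=\log(a\rho/2)+2\sum_{k\ge1}(-1)^{k+1}\rho^{-k}T_k(t)/k$ and therefore $E_m\le 2\rho^{-m}/\bigl((m+1)(\rho-1)\bigr)$.
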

\begin{proof}
   By the Lebesgue inequality  for interpolation \cite{taylor2010lebesgue}, we have
\begin{equation}
\|L_{m+1}f-f\|_\infty\le(1+\Lambda_{m+1})E_m(f),    \label{eq_lebesgue leja2} 
\end{equation} 
where $E_m(f)$ is the best uniform degree approximation error at most $m$ in $K$, and $\Lambda_m$ is the Lebesgue constant associated with the chosen interpolation method \cite{ibrahimoglu2016lebesgue,taylor2010lebesgue}.
Applying this with $f(x)=\log(x)$ and $L_{m+1}f=P_m$, gives
$$ \Vert \log -P_m \Vert_\infty \le (1+\Lambda_{m+1})E_m(\log;[\lambda_{min},\lambda_{max}]).$$
Map $x\in [\lambda_{min},\lambda_{max}]$ to $t\in [-1,1]$ by $x=c+ at$ with 
$$ c= \dfrac{\lambda_{min}+\lambda_{max}}{2}, a = \dfrac{\lambda_{max}-\lambda_{min}}{2},$$
and define $g(t)=\log(x+at).$ Since $g(t)$ is analytic in the Bernstein ellipse $\mathcal{E}_\rho$, classical Bernstein Chebyshev approximation theory \cite{berrut2004barycentric,han2015large}, yields 
\begin{equation}
    E_m(\log,[\lambda_{min},\lambda_{max}]\le \dfrac{4M_\rho}{(\rho-1)\rho^{m}}, \quad \text{with}  \quad \rho=\dfrac{\sqrt{\kappa}+1}{\sqrt{\kappa}-1},
    \label{eq_approximation_log_leja1}
\end{equation}
This implies
\begin{equation}
 E_m(\log,[\lambda_{min},\lambda_{max}]\le C_\rho\rho^{-m}, \qquad \text{with} \quad C_\rho = \dfrac{4M_\rho}{(\rho-1)}. \label{eq_lebesgue leja3}  
\end{equation}
By combining \eqref{eq_lebesgue leja2} and \eqref{eq_lebesgue leja3} , we have
\begin{equation}
    \Vert \log -P_m \Vert_\infty \le (1+\Lambda_{m+1})C_\rho \rho^{-m}. \label{eq_lebesgue leja4}
\end{equation}
It is has been proven by Taylor and Totik in Theorem 1.2 \cite{taylor2010lebesgue}, for the first $m$ points for Léja sequence, $\Lambda_m^{1/m}\rightarrow 1 ~~\text{as}~~ m \rightarrow \infty$ (sub exponential).
Taking the $m$-th root $\lim\sup$ and using  $(1+\Lambda_m)^{1/m}\rightarrow 1$ , \eqref{eq_lebesgue leja4} yields \eqref{eq_lebesgue leja1}. This prove the geometric convergence in the $m$-root sens of Newton Léja of $\log$, with asymptotic exponential  rate  governed by $\rho^{-1}.$ Equation \eqref{eq_action_error} follows from \eqref{eq_lebesgue leja1}.
\end{proof}
\section{Estimating $\mathrm{tr}(\log Q)$ via Real-Léja Interpolation and Hutch ++}
\label{Léja_hutch_sec}
In this section, we describe our proposed algorithm for estimating the Log-determinant of a symmetric positive definite matrix using Hutch ++ which is originally designed for positive semi-definite (PSD) matrices. Suppose the matrix $Q$ is SPD. Then $\log(Q)$ is negative for all $\lambda_{min} \in (0,1)$. This could, therefore, lead to negative eigenvalues. To ensure that this is not the case, that is, that $\log(Q)$ is indeed SPD, we choose $\sigma > 0$ such that:

$$\sigma \le \lambda_{min}.$$
We then define the normalized matrix as follows:
\begin{equation}
\widetilde{Q}\coloneqq
\begin{cases}
    \frac{1}{\sigma}Q,&\textnormal{if}\,\lambda_{min}(Q)<1\\
    Q,&\textnormal{otherwise}.
    
\end{cases}
\label{scaled_sigma_matrix}
\end{equation}
Consequently, when $\lambda_{min}(Q)<1$, we have
$$\lambda_{min}(\widetilde{Q})\ge 1.$$
Thus, $\log(\widetilde{Q})$ is guaranteed to be PSD, and so Hutch++ can be applied. From \eqref{scaled_sigma_matrix}, we have, when $\lambda_{min}(Q)<1$,
\begin{equation}
   \log(\det(Q)) = n\log(\sigma)+\mathrm{tr}( \log(\widetilde{Q})).
\end{equation}
Therefore, to compute $\log(\det(Q))$, it suffices to efficiently estimate $\mathrm{tr}(\log(\widetilde{Q}))$. Next, we describe the approach based on Hutch++ algorithm.
\subsection{Trace estimation with Huth++}
Let $S \in \mathbb{R}^{n\times \frac{1}{3}m_{vec}}$ (with $m_{vec}$, the number of queries)  be a random matrix whose entries $\{+1,-1\}$ are drawn at random with a probability of $\dfrac{1}{2}$. 
Given $S$, we construct the following matrix
\begin{equation}
Y = \log(\widetilde{Q}) S.
\label{eq_lejdetc_onstruction}
\end{equation}
Then, we set 
\begin{equation*}
   A = \operatorname{orth}(Y),
\end{equation*}
where $\operatorname{orth}(Y)$ is the orthonormal matrix corresponding to the QR decomposition of $Y$.
In addition, we define the orthogonal projector $P = AA^\top$ and 
 decomposes the trace as
\begin{equation}
\mathrm{tr}(\log(\widetilde{Q}))
= \mathrm{tr}(P\log(\widetilde{Q})P) + \mathrm{tr}((I-P)\log(\widetilde{Q})(I-P)).
\label{eq_lej_trace_decomp}
\end{equation}
In \eqref{eq_lej_trace_decomp}, the first term is deterministically evaluated (low-rank part), and the second term represents
the residual stochastic contribution and it is estimated via a randomized  Hutchinson scheme. 
Since $A^\top A = I$, the deterministic low-rank term  can be simplified to
\begin{equation}
\mathrm{tr}(P\log(\widetilde{Q})P)
= \mathrm{tr}(A^\top \log(\widetilde{Q}) A)
= \sum_{i=1}^{\frac{1}{3}m_{vec}} v_i^\top \log(\widetilde{Q}) v_i,
\label{eq_lej_deterministic_part}
\end{equation}
where $v_i $ is the $ith$ column of $A$. Each $\log(\widetilde{Q})v_i$ is approximated using Algorithm \ref{algoLéja}.\\[0.2cm]

The residual term in \eqref{eq_lej_trace_decomp} can be written as
\begin{equation}
\mathrm{tr}_{\mathrm{res}}
= \mathrm{tr}((I-P)\log(\widetilde{Q})(I-P))
\simeq \mathbb{E}[v^\top (I-P)\log(\widetilde{Q})(I-P)v].
\label{eq_res_part_hutch}
\end{equation}
This implies
\begin{equation}
    \mathrm{\widehat{tr}}_{res} = \dfrac{3}{m_{vec}}\sum_{i=j}^{\frac{1}{3}m_{vec}}v_j^\top (I-AA^\top) \log(\widetilde{Q})(I-AA^\top)v_j = \dfrac{3}{m_{vec}}\sum_{i=j}^{\frac{1}{3}m_{vec}}u_j^\top \log(\widetilde{Q})u_j,
    \label{eq_lej_residual_def}
\end{equation}
with $\mathrm{\widehat{tr}}_{res}$ the approximation of \eqref{eq_res_part_hutch}, and $u_j = (I-AA^\top)v_j$   the residual correcting additional Rademacher or Gaussian vectors $v_j$. Each $\log(\widetilde{Q})u_j$, is approximated using Algorithm \ref{algoLéja}. 
Combining both  \eqref{eq_lej_deterministic_part} and \eqref{eq_lej_residual_def}, we get 
\begin{equation}
   \widehat{\mathrm{tr}( \log(\widetilde{Q}))} \approx \mathrm{tr}(P\log(\widetilde{Q})P) + \mathrm{\widehat{tr}}_{res}.
    \label{eq_lejdeteq}
\end{equation}
The method can be summarized by the following algorithm :
\newpage
\begin{algorithm}
 \caption{Log determinant estimation of a SPD matrix using Léja points and Hutch++}
\label{algo:log_det_leja}
\begin{algorithmic}
 \STATE \textbf{Input:}{SPD matrix, $Q\in \mathbb{R}^{n\times n}$. The number of queries, $m_{vec}$}. Spectral interval, $[\lambda_{min},\lambda_{max}]$.
 \STATE 1. Define a scaled matrix $\widetilde{Q}\in \mathbb{R}^{n\times n} $ as in \eqref{scaled_sigma_matrix} 
    \STATE 2. Sample sketching matrix $S\in \mathbb{R}^{n\times \frac{1}{3}m_{vec}}$ a Rademacher (or Gaussian) random matrix with i.i.d $\{+1,-1\} $ with probability $\dfrac{1}{2}$.
\STATE 3. Compute $Y=FS = [\log(\widetilde{Q})S_1,\dots,\log(\widetilde{Q})S_{m_s}]$ using the Algorithm \ref{algoLéja}.
\STATE 4. Form an orthonormal basis $A$ ($A \in \mathbb{R}^{n\times  \frac{1}{3}m_{vec}}$) for the span of $Y$ via $QR$ decomposition.
\STATE 5. Compute the small deterministic term $\mathrm{tr}(A^\top \log(\widetilde{Q}) A)$ by applying $\log(\widetilde{Q})$ to the column of $A$  and approximate each $\log(\widetilde{Q})v_i$ using Algorithm \ref{algoLéja}.
\STATE 6. Estimate the residual correcting additional random vectors $u_j = (I-AA^T)v_j$ :
\begin{equation*}
    \mathrm{\widehat{tr}}_{res} = \dfrac{3}{ m_{vec}}\sum_{j=1}^{ \frac{1}{3}m_{vec}}u_j^\top \log(\widetilde{Q})u_j,
\end{equation*}
where $\log(\widetilde{Q})z_j$  are approximated using Algorithm \ref{algoLéja}.
\STATE 7. $ \widehat{\mathrm{tr}( \log(\widetilde{Q}))}\approx \mathrm{tr}(A^\top F A) + \mathrm{\widehat{tr}}_{res}.$
\STATE \textbf{Output: } $ \widehat{\log(\det(Q))} \approx n\log(\sigma)+\widehat{\mathrm{tr}( \log(\widetilde{Q}))}$
\end{algorithmic}
\end{algorithm}

\begin{theorem}
\label{thm_hpp_psd}

If the Hutch++ is implemented with $m_{vec}=\mathrm{O}\big(\sqrt{\log(1/\delta)}/\epsilon + \log(1/\delta)) $ matrix-vector multiplication queries, then for any PSD $B = \log(\widetilde{Q})$ with probability $\geq 1-\delta$, the Hutch++ estimator $\widehat{\mathrm{tr}( \log(\widetilde{Q}))}$ apply to $\log(\widetilde{Q})$  satisfies : 
\begin{equation}
   (1-\epsilon)\mathrel{tr}(\log( \widetilde{Q}))\leq \widehat{\mathrm{tr}( \log(\widetilde{Q}))} \leq (1+\epsilon)\mathrm{tr}(\log( \widetilde{Q})).
   \label{eq_Hutc++ psd}
\end{equation}
\end{theorem}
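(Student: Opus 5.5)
This is essentially the Hutch++ guarantee of \cite{meyer2021hutch++} applied to the PSD matrix $B=\log(\widetilde{Q})$. The plan is to check that $B$ meets the hypotheses and then reproduce the three standard ingredients: a low-rank sketch bound, a Hutchinson variance bound, and an elementary inequality for PSD matrices.

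\textbf{Step 1: hypotheses.} By the normalization \eqref{scaled_sigma_matrix} we have $\lambda_{min}(\widetilde{Q})\ge 1$. Since $\widetilde{Q}$ is symmetric, $B=\log(\widetilde{Q})$ is symmetric with eigenvalues $\log\lambda_i(\widetilde{Q})\ge 0$, so $B$ is PSD. We treat $B$ as an exact matrix-vector oracle, i.e.\ we ignore the Léja error of Proposition~\ref{thm : approximation}; that error is handled separately in the paper. The estimator \eqref{eq_lejdeteq} splits $\mathrm{tr}(B)$ as in \eqref{eq_lej_trace_decomp}. The first term $\mathrm{tr}(A^\top BA)$ is computed exactly, so the whole error is $\widehat{\mathrm{tr}}_{res}-\mathrm{tr}((I-P)B(I-P))$.

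\textbf{Step 2: variance bound.} This is a Hutchinson estimate with $\ell=m_{vec}/3$ Rademacher vectors applied to $C=(I-P)B(I-P)$. Using the Hanson--Wright inequality (or the Hutchinson tail bound of Meyer et al.), with probability at least $1-\delta/2$,
\[
|\widehat{\mathrm{tr}}_{res}-\mathrm{tr}(C)|\le c\sqrt{\log(1/\delta)/\ell}\,\|C\|_F .
\]
Some care is needed here: the residual vectors must be independent of $S$, which is why fresh vectors are drawn in step 6 of Algorithm~\ref{algo:log_det_leja}.

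\textbf{Step 3: low-rank bound.} Write $k=\ell/2$ (up to constants). The randomized range-finder bound for Rademacher sketches states that if $\ell\ge c(k+\log(1/\delta))$, then with probability at least $1-\delta/2$,
\[
\|(I-P)B(I-P)\|_F\le \|B-BP\|_F\le 2\|B-B_k\|_F ,
\]
where $B_k$ is the best rank-$k$ approximation. Separately, for a PSD matrix with eigenvalues $\mu_1\ge\mu_2\ge\dots$, we have $\mu_{k+1}\le \mathrm{tr}(B)/k$. Hence
\[
\|B-B_k\|_F^2=\sum_{i>k}\mu_i^2\le \mu_{k+1}\sum_{i>k}\mu_i\le \mathrm{tr}(B)^2/k,
\]
which gives $\|B-B_k\|_F\le \mathrm{tr}(B)/\sqrt{k}$.

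\textbf{Step 4: combining.} A union bound over Steps 2 and 3 gives, with probability at least $1-\delta$, an error of order $\sqrt{\log(1/\delta)}\,\mathrm{tr}(B)/\ell$. Choosing $\ell\asymp\sqrt{\log(1/\delta)}/\epsilon+\log(1/\delta)$ makes this at most $\epsilon\,\mathrm{tr}(B)$, which is exactly \eqref{eq_Hutc++ psd}.

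The main obstacle is Step 3. It needs a Rademacher (sub-Gaussian) projection bound with failure probability $\delta$ using only $O(k+\log(1/\delta))$ columns, and this is the technical heart of Meyer et al. I would cite their Corollary/Theorem directly rather than reprove it.
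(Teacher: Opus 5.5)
Your proposal is correct and follows the same route as the paper. The paper's proof only notes that $\log(\widetilde{Q})$ is PSD, because $\lambda_{min}(\widetilde{Q})\ge 1$, and then cites Theorem~1.1 of \cite{meyer2021hutch++}; your Steps 2--4 unpack that theorem's proof faithfully (Hutchinson tail bound, range-finder bound, and $\|B-B_k\|_F\le \mathrm{tr}(B)/\sqrt{k}$), and the condition $\ell\gtrsim\log(1/\delta)$ that the Frobenius-only tail bound in Step 2 needs is already supplied by your final choice of $\ell$.
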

 \begin{proof}
   Since $\log(\widetilde{Q})$ is PSD, by applying the Hutch++ to $\log(\widetilde{Q})$ (Theorem 1.1 \cite{meyer2021hutch++}), we have \eqref{eq_Hutc++ psd}.  
\end{proof}

\begin{corollary}
Let $\tilde{Q}$ be defined as in \eqref{scaled_sigma_matrix} with $Q\in\mathbb{R}^{n\times n}$ a symmetric positive definite matrix. Then with probability at least $1-\delta$, we have
\begin{equation}
   (1-\epsilon)\mathrm{tr}(\log( \widetilde{Q}))+n\log(\sigma)\lesssim \widehat{\log(\det(Q))} \lesssim (1+\epsilon)\mathrm{tr}(\log( \widetilde{Q}))+n\log(\sigma) 
   \label{eq_Hutc++ spd 2}
\end{equation}
\end{corollary}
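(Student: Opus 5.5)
The plan is to reduce the corollary to Theorem \ref{thm_hpp_psd} by adding the deterministic shift $n\log(\sigma)$, and to use the $\lesssim$ to absorb the Léja interpolation error from Proposition \ref{thm : approximation}.

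\textbf{Step 1: exact matrix–vector products.} First treat the idealized estimator in which each product $\log(\widetilde{Q})x$ in Algorithm \ref{algo:log_det_leja} is computed exactly. By construction \eqref{scaled_sigma_matrix}, $\lambda_{min}(\widetilde{Q})\ge 1$, so $B=\log(\widetilde{Q})$ is PSD. Theorem \ref{thm_hpp_psd} therefore applies: with probability at least $1-\delta$,
$(1-\epsilon)\mathrm{tr}(B)\le \widehat{\mathrm{tr}(B)}\le(1+\epsilon)\mathrm{tr}(B)$.

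\textbf{Step 2: the shift.} The output is $\widehat{\log(\det(Q))}=n\log(\sigma)+\widehat{\mathrm{tr}(B)}$, and $\log\det Q = n\log\sigma+\mathrm{tr}(\log\widetilde{Q})$. The term $n\log(\sigma)$ is deterministic, so adding it to all three parts of the inequality preserves it and gives \eqref{eq_Hutc++ spd 2} with genuine $\le$. In the case $\lambda_{min}(Q)\ge1$ we have $\widetilde{Q}=Q$; reading $\sigma=1$ there, the shift vanishes.

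\textbf{Step 3: inexact products.} Now account for the fact that each product is replaced by $P_m(\widetilde{Q})x$. Write the computed estimator as the exact one plus a perturbation. The perturbation has a low-rank part and a residual part.

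\begin{itemize}
\item For the low-rank part $\mathrm{tr}(A^\top(\log\widetilde{Q}-P_m(\widetilde{Q}))A)$, each column of $A$ has unit norm, and $\|\log(\widetilde{Q})-P_m(\widetilde{Q})\|_2\le\|\log-P_m\|_{\infty,K}$ by symmetry. So this part is at most $\frac{m_{vec}}{3}(1+\Lambda_{m+1})C_\rho\rho^{-m}$ by Proposition \ref{thm : approximation}.
\item For the residual part, $\|u_j\|_2^2\le n$, which gives the analogous bound with an extra factor of $n$.
\item The orthonormal basis $A$ is built from the perturbed $Y$. However, Hutch++'s guarantee (Theorem 1.1 of \cite{meyer2021hutch++}) only needs $A$ to capture a good low-rank approximation. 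Since the perturbation $\|Y-\log(\widetilde{Q})S\|$ is also $O((1+\Lambda_{m+1})\rho^{-m})$, I would argue that the projector changes by a geometrically small amount.
\end{itemize}

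All of these errors are $O(n\,m_{vec}(1+\Lambda_{m+1})C_\rho\rho^{-m})$. Since $\Lambda_m^{1/m}\to1$, they are geometrically small in $m$, and they are controlled by the stopping tolerance $tol$ in Algorithm \ref{algoLéja}. These are exactly the terms hidden in $\lesssim$.

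\textbf{Main obstacle.} The delicate step is the effect of the perturbed sketch $Y$ on the projector $P=AA^\top$, because QR is not Lipschitz when $Y$ is ill-conditioned. The first approach I would try is to avoid perturbation theory for $A$ altogether. The decomposition \eqref{eq_lej_trace_decomp} is exact for any orthonormal $A$, and the residual Hutchinson estimate remains unbiased for whatever $A$ is used. So it suffices to observe that $A$, computed from $P_m(\widetilde{Q})S$, is a sketch of the PSD matrix $P_m(\widetilde{Q})$, at least when $m$ is large enough that $P_m\ge0$ on $K$. Applying Theorem \ref{thm_hpp_psd} to $P_m(\widetilde{Q})$ itself, and then comparing $\mathrm{tr}(P_m(\widetilde{Q}))$ with $\mathrm{tr}(\log\widetilde{Q})$ at a cost of $n\|\log-P_m\|_\infty$, yields the stated bound up to that additive error.
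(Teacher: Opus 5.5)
Your Steps~1--2 are exactly the paper's proof. The paper applies Theorem~\ref{thm_hpp_psd} to the PSD matrix $\log(\widetilde{Q})$ and adds the deterministic shift $n\log(\sigma)$ to all three members of \eqref{eq_Hutc++ psd}, and nothing more. It never defines $\lesssim$, and it does not address the fact that every query is a Léja approximation. Your reading $\sigma=1$ when $\lambda_{min}(Q)\ge 1$ also settles an ambiguity the paper leaves open. Step~3 is therefore your own addition, and of your two routes the fallback is the right one. Treat the output as Hutch++ applied to the PSD matrix $P_m(\widetilde{Q})$, then pay $n\|\log-P_m\|_{\infty,K}$ to pass from $\mathrm{tr}(P_m(\widetilde{Q}))$ to $\mathrm{tr}(\log\widetilde{Q})$. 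This is in substance what the paper does only afterwards, in the proposition splitting $|\widehat{L}-L|\le|\widehat{L}-L_m|+|L_m-L|$. You also supply the check that $P_m(\widetilde{Q})$ is PSD, which the paper omits when it invokes Theorem~\ref{thm_hpp_psd} there.

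If you keep Step~3, tighten three points.

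(i) Drop the claim that $AA^\top$ moves by a geometrically small amount. Its change is governed by roughly $\|\Delta Y\|_2$ divided by the smallest singular value of $Y$, for which you have no lower bound. Hutch++ only needs $\|(I-AA^\top)\log(\widetilde{Q})\|_F$ to be small. The range-finder argument tolerates an additive sketch error $E$ at a cost of about $\|E\|_F\,\|(V_1^\top S)^{\dagger}\|_2$, where $V_1$ holds the top eigenvectors, with no conditioning assumption on $Y$.

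(ii) The fallback needs a single fixed degree $m$ for all queries. Algorithm~\ref{algoLéja} stops adaptively, so $v\mapsto P_{m(v)}(\widetilde{Q})v$ is not one linear map. Moreover, $|d_m|\,\|w_m\|_2\le tol$ is an error indicator, not a certified bound. So ``controlled by $tol$'' should become an a priori choice of $m$ from the Léja error bound, used with some $\rho'<\rho$. The paper's $\rho$ is exactly the Bernstein parameter of the singularity of $\log$ at $0$, so its $C_\rho$ is infinite.

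(iii) If the left end of $K$ is exactly $1$ (e.g.\ $\sigma=\lambda_{min}$), then $\log$ vanishes there, and uniform convergence alone does not give $P_m\ge 0$ on $K$. Either take $\sigma<\lambda_{min}$, or use that this endpoint is a Léja node and that $P_m'\to 1/x$ uniformly on $K$.
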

\begin{proof}
By adding  $n\log(\sigma)$ in \eqref{eq_Hutc++ psd}, we obtain 
$$  (1-\epsilon)\mathrm{tr}(\log( \widetilde{Q})) + n\log(\sigma)\lesssim  \widehat{\mathrm{tr}( \log(\widetilde{Q}))} +n\log(\sigma)\lesssim  (1+\epsilon)\mathrm{tr}(\log( \widetilde{Q}))+n\log(\sigma),$$
then 
$$(1-\epsilon)\mathrel{tr}(\log( \widetilde{Q})) + n\log(\sigma)\lesssim \widehat{\log(\det(Q))}\lesssim  (1+\epsilon)\mathrm{tr}(\log( \widetilde{Q}))+n\log(\sigma).$$
\end{proof}
Next,  we derive error bounds of Log-determinant estimation by the  following proposition.
\begin{proposition}
Let $\widetilde{Q}$ be SPD with $\sigma(\widetilde{Q})\subset [\alpha,\beta], 1\le \alpha < \beta$. Let $P_m$ the $m$-degree  Newton - Léja polynomial interpolation. Define 
\begin{equation*}
  L =\mathrm{tr}(\log(\widetilde{Q})),   \quad L_m=\mathrm{tr}(P_m(\widetilde{Q})).
\end{equation*}
 Let $\widehat{L}$, the hutch++ trace estimation applied to $P_m(\widetilde{Q})$. Therefore for $\epsilon \in (0,1)$ and $\delta \in (0,1), \nu> 0 $ the trace error with probability at least $1-\delta$ satisfies 
\begin{equation}
    \vert \widehat{L}-L\vert \le n(1+m\mathrm{e}^{\nu m})C_\rho\rho^{-m} + \epsilon L_m,\qquad \rho>1.
\end{equation}
\end{proposition}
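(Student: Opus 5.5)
Split the error with the triangle inequality as
\[
|\widehat{L}-L| \le |L-L_m| + |\widehat{L}-L_m|,
\]
and treat the two pieces separately: the first is a deterministic interpolation error, the second is a stochastic Hutch++ error. For the first term, note that $\log(\widetilde{Q})-P_m(\widetilde{Q})$ is symmetric with eigenvalues $\log(\lambda_i)-P_m(\lambda_i)$, where $\lambda_i\in[\alpha,\beta]$. Hence
\[
|L-L_m| = \Bigl|\sum_{i=1}^n \bigl(\log\lambda_i - P_m(\lambda_i)\bigr)\Bigr| \le n\,\Vert \log - P_m\Vert_{\infty,[\alpha,\beta]}.
\]
Proposition \ref{thm : approximation}, applied on $K=[\alpha,\beta]$ with $0<1\le\alpha<\beta$, then gives $|L-L_m|\le n(1+\Lambda_{m+1})C_\rho\rho^{-m}$.

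The step that produces the specific form of the statement is bounding the Léja Lebesgue constant explicitly by $\Lambda_{m+1}\le m\,\mathrm{e}^{\nu m}$ for an arbitrary $\nu>0$. Proposition \ref{thm : approximation} only uses the Taylor–Totik fact that $\Lambda_m^{1/m}\to 1$. My plan is to turn this subexponential growth into the explicit form. For any $\nu>0$ there is an $m_0$ with $\Lambda_{m+1}\le \mathrm{e}^{\nu m}$ for $m\ge m_0$, hence $\Lambda_{m+1}\le m\,\mathrm{e}^{\nu m}$ for $m\ge\max(m_0,1)$. For the finitely many remaining $m$, the discrepancy can be absorbed into $C_\rho$, which is allowed to depend on $\nu$. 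Alternatively, one can invoke a known polynomial-times-subexponential bound on Léja Lebesgue constants on an interval. This is the step I expect to be the main obstacle, because the constant bookkeeping is delicate: the statement keeps $C_\rho$ unchanged, so a sharper literature bound on $\Lambda_m$ would be cleaner if available. With this bound in hand, $|L-L_m|\le n(1+m\mathrm{e}^{\nu m})C_\rho\rho^{-m}$.

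For the second term, I need to apply Theorem \ref{thm_hpp_psd} to $B=P_m(\widetilde{Q})$ rather than to $\log(\widetilde{Q})$, so I must check that $P_m(\widetilde{Q})$ is PSD. Since $\log\lambda\ge \log\alpha\ge 0$ on the spectrum, it suffices that the interpolation error does not exceed $\log\alpha$. This holds for $m$ large enough by the uniform convergence just established; in the borderline case $\alpha=1$, an additional small shift or the sufficiently-large-$m$ assumption is needed, and I would state this explicitly. Hutch++ (Theorem 1.1 of \cite{meyer2021hutch++}, i.e.\ Theorem \ref{thm_hpp_psd} with $B=P_m(\widetilde{Q})$) then gives, with probability at least $1-\delta$ and $m_{vec}=\mathrm{O}(\sqrt{\log(1/\delta)}/\epsilon+\log(1/\delta))$ queries, $|\widehat{L}-L_m|\le\epsilon L_m$. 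Substituting both bounds into the triangle inequality yields the claim on the same event of probability at least $1-\delta$.
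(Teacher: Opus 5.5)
This is essentially the paper's own proof: the same split into $|\widehat{L}-L_m|$ plus $|\mathrm{tr}(\log(\widetilde{Q})-P_m(\widetilde{Q}))|$, the same bound $n(1+\Lambda_{m+1})C_\rho\rho^{-m}$ via \eqref{eq_lebesgue leja4}, and Hutch++ for $|\widehat{L}-L_m|\le\epsilon L_m$. The paper takes your second option for the Lebesgue constant, simply citing \cite{totik2023lebesgue} for $\Lambda_{m+1}\le m\mathrm{e}^{\nu m}$, and, unlike you, it never checks that $P_m(\widetilde{Q})$ is PSD before applying a Hutch++ guarantee stated for $\log(\widetilde{Q})$. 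Your caution about the constant is justified: a bound $\Lambda_{m+1}\le m\mathrm{e}^{\nu m}$ holding for every $\nu>0$ and every $m$ is equivalent to $\Lambda_{m+1}\le m$, which is far stronger than the Taylor--Totik subexponential result, and it fails outright at $m=0$, where $\Lambda_1=1$; subexponential growth alone only gives your version, with $m\ge m_0(\nu)$ or a $\nu$-dependent constant.
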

\begin{proof}
    Let 
    $$\widehat{L}= \widehat{\mathrm{tr}(P_m(\widetilde{Q}))}$$
This apply,
$$ \widehat{L}-L= \widehat{L}-(L_m-\mathrm{tr}(E_m(\widetilde{Q}))),\quad \text{where}\quad E_m(x)=\log(x)-p(x). $$
This implies 
$$ \widehat{L}-L = \left(\widehat{L}-L_m\right)-\left(\mathrm{tr}(\log(\widetilde{Q})-P_m(\widetilde{Q}))\right).$$
Taking the absolute value and by Triangle inequality, we have
\begin{equation}
 \vert \widehat{L}-L \vert \le \vert \widehat{L}-L_m\vert + \vert \mathrm{tr}(\log(\widetilde{Q})-P_m(\widetilde{Q}))\vert. 
 \label{eq_tot_error_leja1}
\end{equation}
By \Cref{thm_hpp_psd}, we have $\vert \widehat{L}-L_m\vert \le \epsilon L_m$; and by trace properties and \eqref{eq_lebesgue leja4}, we have $\vert \mathrm{tr}(\log(\widetilde{Q})-P_m(\widetilde{Q}))\vert \le n (1+\Lambda_{m+1})C_\rho \rho^{-m} $, thus \eqref{eq_tot_error_leja1} becomes
\begin{equation}
\vert \widehat{L}-L \vert \le n(1+\Lambda_{m+1})C_\rho \rho^{-m} + \epsilon L_m
    \label{eq_tot_error_leja2}
\end{equation}
For $\nu>0, \quad \Lambda_{m+1} \le m\mathrm{e}^{\nu m}$ as in \cite{totik2023lebesgue}. Therefore,  
we have with probability at least $1-\delta$
$$\vert \widehat{L}-L\vert \le n(1+m\mathrm{e}^{\nu m})C_\rho \rho^{-m}   + \epsilon L_m.$$
\end{proof}
\section{Numerical Experiments}
\label{experiments}
In this section, we compare the proposed Log-determinant method against a wide variety of sparse matrices. Our benchmark includes both realistic data sets from the University of Florida(UFL) Sparse Matrix Collection \cite{davis2011university} and  synthetic pentadiagonal matrices. Some of these UFL matrices were also used in \cite{han2015large,boutsidis2017randomized,ubaru2017fast}. The actual matrices occur over a range of important application areas: arise in computational fluid dynamics simulations; arises from finite element models in structural engineering, elasticity benchmarks, network admittance matrices used in power system analysis, material science, plasma physics, and modeling crystalline structures. Aside from these, we also generate pentadiagonal symmetric definite (SPD) matrices, which provide an easy but representative class of structured banded problems.

\quad
This broad spectrum of matrices allows us to experiment with our method under all situations: very sparse and unstructured meshes (finite element analysis, computational fluid dynamics), structured systems based on graphs (power grids), dense local coupling (molecular and crystal models), physics-based operators (gyrokinetic plasma simulation), and synthetic banded operators. Such variation is required because it evidences how  our Léja points-based interpolation  algorithm performs in practice under differing sparsity patterns and spectral distributions (see table \ref{table 1 leja}). Léja ( $s_{val} = \lambda_{max}/2$) and Léja ($s_{val} = c$) represent, respectively, our methods based, respectively, on the centering strategy from $s_{val} = \lambda_{max}/2$ and $s_{val} =c$.  The Exact Log-determinants are computed using Cholesky factorization available in Scipy. The comparison here is made with SLQ \cite{ubaru2017fast}. Comparison with Schur, Shogun, Taylor can be seen in  (\cite{han2017approximating}, Fig. 1) showing that Chebyshev is superior to these methods and comparison with chebyshev can be seen in (\cite{ubaru2017fast}, Fig. 1) showing that SLQ is superior to Chebyshev. All experiments in this work  are performed on a \textbf{MacBook Pro with Apple M1 Pro chip and 16 GB RAM, using Python 3.11.8}. The code of our method can be found here \footnote{\url{https://github.com/VerlonRoelMBINGUI/Log_determinant-using-Leja-Point.git}}

\begin{table}[ht]
\caption{Description of sparse matrices used in the experiments. 
Real matrices are from the UFL Sparse Matrix Collection \cite{davis2011university}.}
\label{table 1 leja}
\centering
\begin{tabular*}{0.81\linewidth}{@{} llll @{}}
\toprule
Matrix        & Application Domain              & Size ($n \times n$)\\
\midrule
\texttt{pdb1HYS}      &  protein structure (bioinformatics)  & $36,417 \times 36,417$ \\
\texttt{ship\_001}      &  ship section (structural engineering)    & $34,920 \times 34,920$ \\
\texttt{boneS01}      & Model Reduction  (biomedical engineering)      & $ 127,224 \times 127,224$ \\
\texttt{ecology2} &   Circuit theory applied to animal/gene flow      & $ 999,999
 \times 999,999
$ \\
\texttt{af\_shell8}  &  Structural mechanics and Sheet metal forming          & $ 504,855 \times 504,855 $ \\
\texttt{bone010}  & Biomechanics and Biomechanical engineering     & $986,703 \times 986,703 $ \\
\texttt{crystm01}  & Materials science (crystal)     & $4,300 \times 4,300$ \\

 \texttt{crystm02}  & Material science (crystallography)  &  $13,965 \times 13,965$\\
 \texttt{gyro\_k} & Plasma physics (fusion)   & $ 17,361 \times 17,361 $ \\
 \texttt{wathen120}  & 2D FEM elasticity benchmark  & $36,441 \times 36,441$ \\
\bottomrule
\end{tabular*}
\end{table}
\subsection{Log Determinant}
\paragraph{\textbf{Using Pentaadiagonal generated matrices.}}
 we test the performance of our method on large, synthetically created pentadiagonal matrices.
The matrices were created independently sampling all of the diagonals with a uniform distribution in the interval $[0,1]$, using command \texttt{np.random.rand}, and assembling them using the \\ \texttt{scipy.sparse.diags} function in Python. In particular, the main diagonal, two first off-diagonals (offsets $\pm 1$), and two second off-diagonals (offsets $\pm 2$) were constructed as

\texttt{diagonals} = [ \texttt{np.random.rand(n)}, \texttt{np.random.rand(n-1)}, 

\texttt{np.random.rand(n-2)}, \texttt{np.random.rand(n-1)}, \texttt{np.random.rand(n-2)}],

and combined into a sparse CSR matrix with
$$Q = \texttt{sp.diags(diagonals, [0,1,2,-1,-2], format='csr')}.
$$
To guarantee positive definiteness, the matrix was symmetrized and shifted by an integer multiple of the identity,
$Q \; \leftarrow \; Q + Q^{\top} + nI,
$
with $n$ the matrix size. This ensures all eigenvalues are strictly positive, making the matrices appropriate for Log-determinant computations.
For our experiments, we used sizes of up to $10^7$, generating very large structured sparse systems.
Pentadiagonal matrices of this type arise naturally in scientific computing, for example, in the discretization of higher-order differential operators.
Fourth-order finite difference methods for biharmonic equations in elasticity (e.g., plate bending models) or fluid dynamics (e.g., streamfunction formulations of the Stokes problem) give pentadiagonal systems.
Likewise, high-order discretizations of one-dimensional beam vibration problems produce banded matrices with such structure.
Through experimentation on synthetically generated SPD pentadiagonal matrices produced thereby, we mimic the sparsity and conditioning characteristics of real applications while maintaining problem size and numerical properties under our control.\\

\begin{figure}
     \centering
 
         \includegraphics[width=0.45\textwidth]{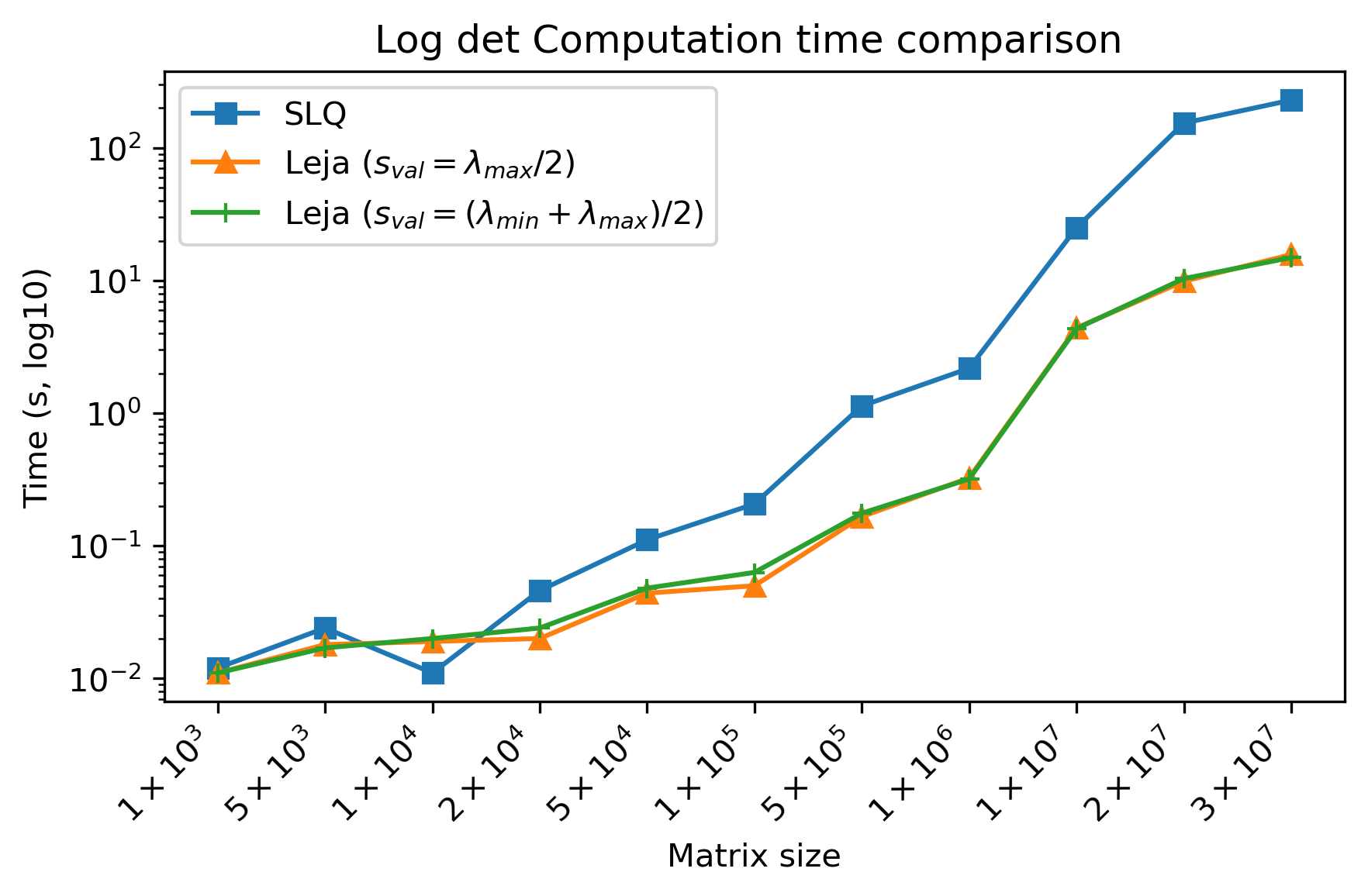}
         \includegraphics[width=0.45\textwidth]{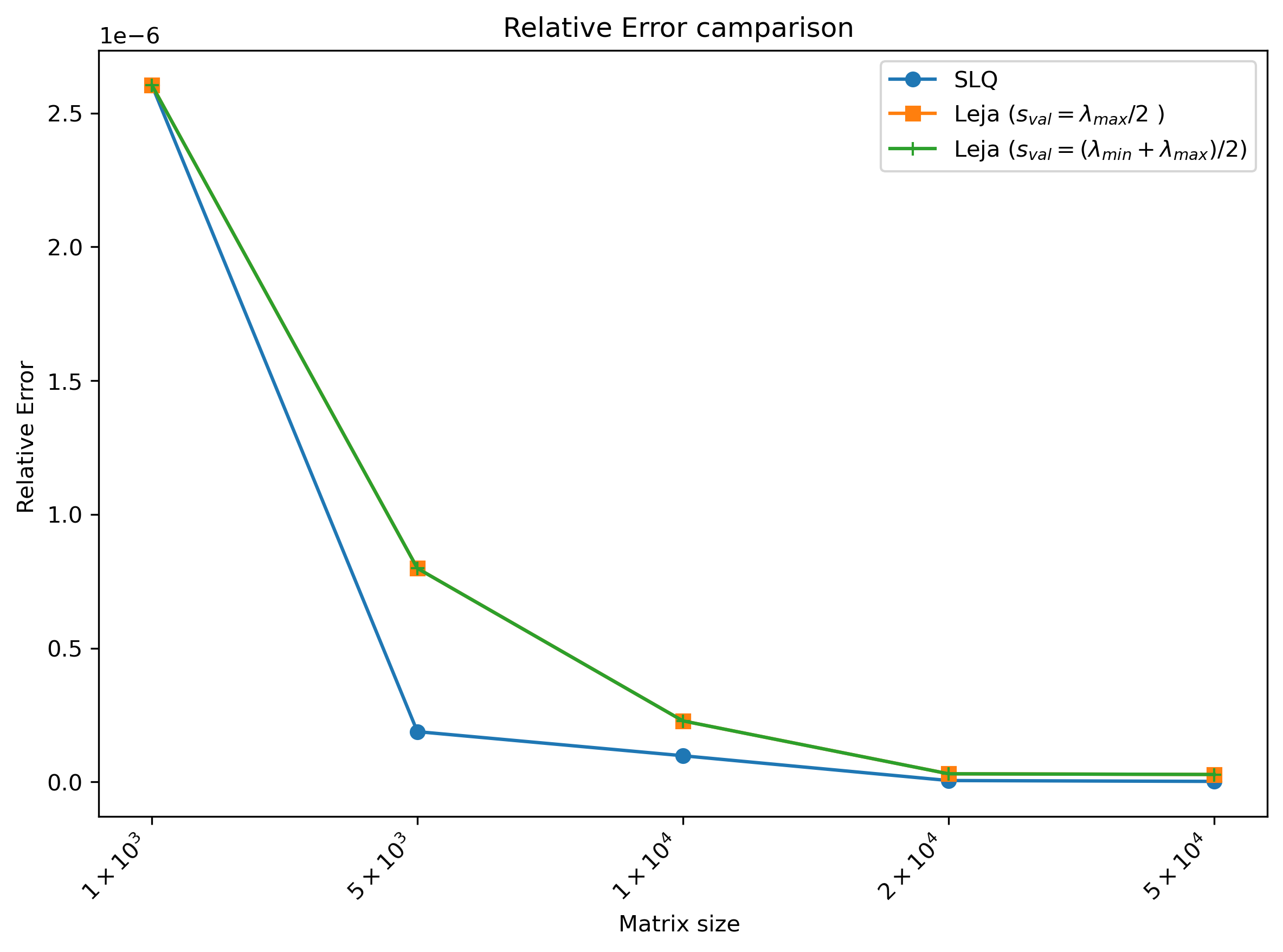}
     \caption{SLQ and Léja based techniques performance comparison on pentadiagonale matrices  accros variouss size. The matrix structure is defined  by $Q$ = sp.diags(diagonals, [0,1,2,-1,-2], format='csr').}
     \label{fig_total_comparison}
\end{figure}
Figure \ref{fig_total_comparison} shows that \emph{SLQ} typically has the smallest relative errors overall at such sizes, but its behavior is more erratic for intermediate sizes and much worse at the largest $n$. Our proposed methods is consistently the fastest for all sizes with small relative errors typically.  Taken overall, these results indicate that proposed method offers the best \emph{speed} with good accuracy. Cholesky factorization is used as the exact the Log-determinant method although  it requires too much time for computing.

\paragraph{\textbf{Using Real problems matrices for UFL.}}
In the first experiment, we evaluate the performance of our method with respect with Cholesky decomposition and the Stochastic Lanczos Quadrature (SLQ) \cite{ubaru2017fast}. A '-', symbol in Table \ref{table 2 leja} indicates that the exact computation could not be
performed on our hardware due to memory breakdowns.
The degree $m_l$  and  $m$ used and the Log-determinant estimated respectively by SLQ and by our methods based ( Léja points interpolation and Hutch ++) along with their CPU times ( in seconds)  are respectively reported  in table \ref{table 2 leja} and Table \ref{table 3 leja}. In all our experiments, for the SLQ, we used $n_v = 30$ (number of probe vectors) as in \cite{ubaru2017fast} and  $m_{vec}$ the number of queries for our method is reported in Table \ref{table 3 leja}. \\
We observe that in all our experiments, the result obtained by our method ( using either with $s_{val}$ from $s_{val} = \lambda_{max}/2$ or $s_{val}=c $ are way faster than SLQ  with a very competitive relative error attaining 5 digits (see Table \ref{table 4 leja}). In addition, we observe that our method significantly outperforms the SLQ  method for high dimensional matrices particularly in case  such as   Ecology2, bone010, af\_shell8 which are respectively $ 999, 999 \times 999, 999$, $ 986, 703 \times 986, 703 $, and $ 504 855 \times 504, 855 $, our method is respectively 16 times, 6 times, 7 times faster than SLQ. Also it is important to note that our method requires  the estimation of the spectral interval of the matrix. The computational reported in Table \ref{table 3 leja} and Table \ref{table 4 leja} include this additional cost of estimating the spectral interval which is almost free using Gershgorin Circle Theorem \cite{thomas2013numerical}. In all our  experiments, we use the Gershgorin Circle Theorem to estimate the spectral interval.

\begin{table}[ht]
\centering
\caption{Comparison of Log-determinant estimates (SLQ)}\label{table 2 leja}
\begin{tabular*}{0.6\linewidth}{@{} l l c c c @{}}
\toprule
Matrices & Exact logdet & \multicolumn{3}{c}{SLQ} \\ 
\cmidrule{3-5}
& & $m_l$ & Estimate & Time (s) \\ 
\midrule
\texttt{crystm02}      & -406912.286        &20     & -406899.155  &5.548  \\
\texttt{gyro\_k}     &   304741.205         & 20     &305687.114  & 3.28    \\
\texttt{wathen120} & 127080.312   &90  &127096.032 & 36.0346 \\
\texttt{pdb1HYS}      & 170081.683       &20  &  170296.485       & 9.828  \\
\texttt{ship\_001}      & 761021.440         & 90  &  768328.034     &51.304       \\
\texttt{crystm01}  &  -137117.213   &60  & -137119.330 & 9.410  \\
\texttt{boneS01} & - &35 & 1104097.055& 12.754 \\
\texttt{ecology2} & - &60 & 3394616.8760& 164.414\\
\texttt{bone010} & - & 60& 9290425.005& 187.239  \\
\texttt{af\_shell8} & - &60 &6466212.797 &78.745 \\
\bottomrule
\end{tabular*}
\end{table}

\begin{table}[ht]
\centering
\caption{Comparison of Log-determinant estimates (based on Léja points interpolation and Hutch ++)}\label{table 3 leja}
\begin{tabular*}{0.9\linewidth}{@{} l c c c c c c c c @{} }
\toprule
Matrices & \multicolumn{4}{c}{Léja($s_{val} = \lambda_{max}/2$)} & \multicolumn{4}{c}{Léja( $s_{val} = c$)} \\ 
\cmidrule(lr){2-5} \cmidrule(lr){6-9}
& m & $m_{vec}$ & Estimate & Time (s) & m & $m_{vec}$ & Estimate & Time (s) \\ 
\midrule
\texttt{crystm02}    & 251 &9 &-406929.368 & 3.644 & 251&  9 &-406930.927 &  1.214\\
\texttt{gyro\_k}   & 67 & 9&  306062.182& 0.799&143 & 9 & 305857.406&  1.378 \\
\texttt{wathen120} &72& 12& 127094.518& 1.503& 81&12  &127086.383 &0.840 \\
\texttt{pdb1HYS}    & 72 &12 &170170.037 & 4.321& 81&12  &   170141.561&4.076 \\
\texttt{ship\_001}    & 275 &12 &774096.604 &  9.716 &301 & 12 & 773243.535&8.1160   \\
\texttt{crystm01} & 182& 12    &-137081.282  & 1.435& 207& 12& -137082.603 & 0.344 \\
\texttt{boneS01} &75& 9 &1104378.620 & 4.367 & 81&9 &  1104178.901& 3.190 \\
\texttt{ecology2}&78 & 12 & 3394984.866&9.2250& 87& 12&3394586.037 & 9.073  \\

\texttt{bone010}& 78&9 & 9291146.481& 24.554 &87 &9&  9290971.727 &27.144 \\
\texttt{af\_shell8} &75&9 & 6467587.011&10.023&81 &9 &6467117.287& 9.098 \\
\bottomrule
\end{tabular*}
\end{table}

\begin{table}[ht]
\centering
\caption{Relative error comparison between SLQ with our proposed method with Cholesky factorization as the exact method. We have only computed for low dimensional matrices where can Cholesky handle. }
\label{table 4 leja}
\begin{tabular*}{0.85\linewidth}{@{} llll@{} }
\toprule
Matrices & SLQ Rel. err.&  Léja( $s_{val} = \lambda_{max}/2$) Rel. err.& Léja( $s_{val} = c$) Rel. err.   \\
\midrule
\texttt{crystm02}  & $3.227 \times 10^{-5}$ &$4.198\times 10^{-5}$&$4.581\times 10^{-5}$\\
\texttt{gyro\_k} & $3.104 \times 10^{-3}$  &  $4.335\times 10^{-3}$& $ 3.663 \times 10^{-3}$\\
\texttt{wathen120}  & $1.237 \times 10^{-4}$  & $1.118\times 10^{-4}$&$4.777\times 10^{-5}$ \\
\texttt{pdb1HYS}  & $1.263 \times 10^{-3}$& $5.195 \times 10^{-4}$ &$3.521\times 10^{-4}$ \\
\texttt{ship\_001}  & $9.601 \times 10^{-3}$& $1.718 \times 10^{-2}$  &$1.606 \times 10^{-2}$\\
 \texttt{crystm01}  & $1.544 \times 10^{-5}$ & $2.620\times 10^{-4} $& $2.524\times 10^{-4} $\\
\bottomrule
\end{tabular*}
\end{table}

In \Cref{table 4 leja}, we note that the method ( $s_{val} = c$) has higher accuracy than  ( $s_{val} = \lambda_{max}/2$) as suggested by Lemma \ref{lem_ Minimax optimality}.
\subsection{Maximum likelihood estimation for GMRF using Log-determinant}
In this section, we apply our novel proposed method of approximating the 
log determinants to evaluate the maximum likelihood(ML) in Gaussian Markov Random fields.
\paragraph{\textbf{Gaussian Markov Random fields (GMRF) with 100 million variables on synthetic data.}}
As in Section 5.2 of \cite{han2017approximating}, Our synthetic data is generated from a $2D$  GMRF on a square grid of size $5000 \times 5000$ with a precision matrix $Q \in \mathbb{R}^{n\times n}$ $n = 25\times 10^{6}$, with four nearest neighbor structures and partial correction $\theta = -0.22$. A Sample $x$ from the GRMF model is generated using a Gibbs sampler for parameter $\theta$. The likelihood of this sample is then given by 
$$ 2\log(p(x/\theta))=\log(\det(Q(\theta)))-x^\top Q(\theta)x -n\log(2\pi), $$
with $Q(\theta)$ is a matrix of dimension $25\times 10^{6}$ and approximately $10^{8}$ non-zero entries. Therefore, it is required to solve: 
$$ \max_{\theta}\log(p(x/\theta)).$$
In the previous equation, estimating $\log(\det(Q(\theta))$ is the main challenge, especially when $n$ is very large.
Four $4$- neighbor grids, the eigen value of $Q(\theta)$,
$$ \lambda(Q(\theta)) \subset [1-4\vert\theta|, 1+4\vert\theta|]. $$
Then, we define $\sigma = 1-4\vert\theta|$  and then rewrite $$ \mathrm{tr}\widehat{(\log(Q(\theta)))}=\widehat{\mathrm{tr}\log\big(\frac{1}{\sigma}Q(\theta)\big)} + n\log(\sigma).$$
Where the eigen values of  $\frac{1}{\sigma}Q(\theta)$ are greater than equal to $1$ and  $\log\big(\frac{1}{\sigma}Q(\theta)\big)$ PSD. Here $\widehat{\mathrm{tr}\log\big(\frac{1}{\sigma}Q(\theta)\big)}$ our developed method. 
Figure \ref{fig_log_likelihood} confirmed that the log-likelihood is maximized at the correct hidden parameter $\theta$.

\begin{figure}
     \centering
         \includegraphics[width=0.5\textwidth]{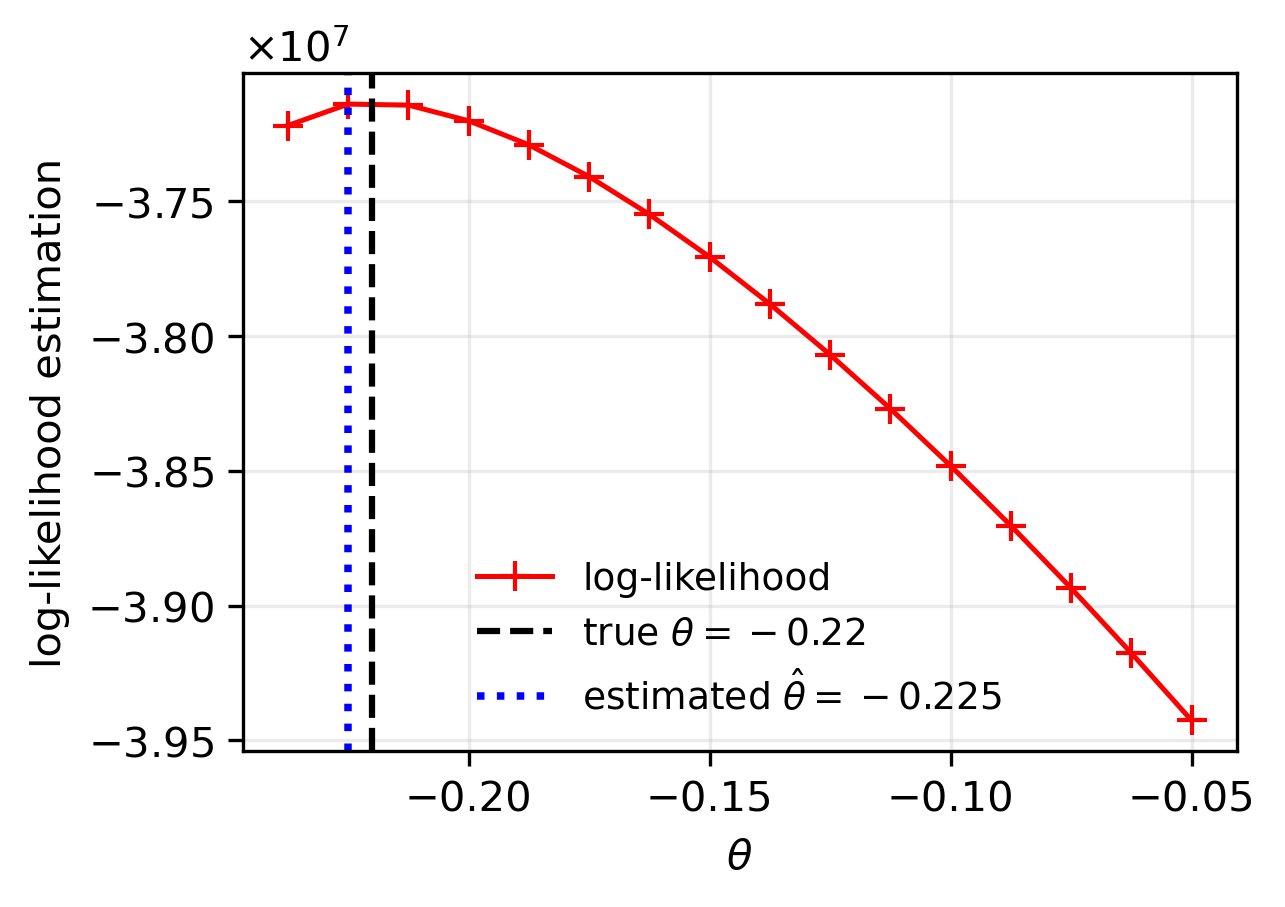}
     \caption{Log-likelihood estimation for hidden parameter $\theta$ for square GMRF model of size $5000\times 5000$.}
  \label{fig_log_likelihood}
\end{figure}

\section{Conclusions}
\label{sec_conclusions}
In this paper, we studied a novel inexpensive technique of estimating the Log-determinant of a large  sparse symmetric positive definite matrix. The method combines Léja points interpolation with the hutch++ stochastic trace estimator.  We introduce a novel centering strategy for the computation of the logarithmic divided-differences. We derived approximation error bounds for Léja points interpolation and Log-determinant estimation. Numerical experiments on large real and synthetic sparse matrices demonstrate the superior performance of our method. These results establish the proposed method as reliable and and scalable alternative for large-scale Log-determinant  estimation.

\appendix
\section*{ Appendix}

\section{ Alternative approach of computing the spectral interval baesd on Lanczos  method } 
\label{appendix A}
    Let $Q \in \mathbb{R}^{n\times n}$ be a symmetric positive definite matrix. Spectral interval can be also estimated using the following technique:  
$$ \sigma (Q) \subseteq [\lambda_{min}, \lambda_{max}], \quad  \lambda_{min} = \min_i\lambda_i(Q), \quad \lambda_{max} = \max_i\lambda_i(Q), $$
with $\lambda_i$ eigenvalues of $Q.$
The largest eigenvalue can be expressed through the Rayleigh quotient : 
$$ \lambda_{max} = \max_{\Vert x \Vert = 1}x^T Q x.$$
We compute $\lambda_{max}$ using the Lanczos iteration \cite{lanczos1950iteration}, implemented in \texttt{eigsh} with option \texttt{which='LA'} (Largest Algebraic). The Lanczos method projects $Q$ onto the Krylov subspace. 
$$ K_m(Q,v)= \mathrm{span}\{v,Qv,Q^2v,\cdots Q^{m-1}v\},$$
and extracts Ritz approximation of eigenvalues. For SPD matrices, convergence to $\lambda_{max}$ is typically fast \cite{saad2011numerical}.
Computing $\lambda_{min}$ directly via \texttt{which='SA'}(Smallest Algebraic) can be unstable, since Lanczos converges slowly towards the lower edge of the spectrum. Instead, we employ the shift-inverted strategy \cite{ericsson1980spectral} :
$$ B = (Q-\sigma I)^{-1}, \quad \sigma \geq 0.$$
The  eigenvalue of $A$ is  given by
$$\mu_i=\dfrac{1}{\lambda_i-\sigma}.$$
Thus the largest eigenvalue of $B$ corresponds to the smallest eigenvalue of $Q$. In practice we call 
$$ \texttt{eigsh(Q, k=1, sigma=0.0, which='LM')}$$
so that the ARPACK computes the eigenvalue of largest magnitude in the shifted system, yielding $\lambda_{min}.$

\section*{Data availability}
Some of  the data used in our experiments were obtained from  the University of Florida(UFL) Sparse Matrix Collection \cite{davis2011university}.

\section*{Acknowledgements}
This work was made possible by a grant from Carnegie Corporation of New York (provided through the AIMS Research and Innovation Centre). The statements made and views expressed are solely the responsibility of the authors.

\bibliographystyle{plain}
\bibliography{main}

\end{document}